\documentclass[a4paper]{article}

\usepackage{lipsum}
\usepackage{amsmath,amsthm,amssymb}
\usepackage{graphicx}
\usepackage{hhline,url}
\usepackage{authblk}
\usepackage{harvard}
\citationmode{abbr}

\addtolength{\oddsidemargin}{-40pt}
\addtolength{\textheight}{80pt}
\addtolength{\topmargin}{-40pt}
\addtolength{\textwidth}{80pt}

\newtheorem{thm}{Theorem}[section]
\newtheorem{lem}[thm]{Lemma}
\newtheorem{remark}[thm]{Remark}
\newenvironment{rem}{\begin{remark}\rm}{\end{remark}}
\newtheorem{algor}[thm]{Method}
\newenvironment{algo}{\begin{algor}\rm}{\end{algor}}
\newtheorem{prop}[thm]{Proposition}
\newtheorem{Definition}[thm]{Definition}
\newenvironment{dfn}{\begin{Definition}\rm}{\end{Definition}}
\newtheorem{Corollary}[thm]{Corollary}

\newtheorem{Example}{Example}[section]

\newcommand{\acite}{\citeasnoun}

\renewcommand{\phi}{\varphi}

\newcommand{\R}{\mathbb{R}}
\newcommand{\dd}{\,\mathrm{d}}
\newcommand{\ve}{\varepsilon}

\newcommand{\F}{\mathcal{F}}
\newcommand{\E}[1]{\mathrm{E}\left[#1\right]}
\newcommand{\PP}{\mathcal{P}}
\newcommand{\X}{\mathcal{X}}
\newcommand{\bm}[1]{{\mbox{\boldmath $#1$}}}
\DeclareMathOperator{\im}{Im}
\DeclareMathOperator{\cv}{conv}

\DeclareMathOperator{\ri}{ri}
\DeclareMathOperator{\supp}{supp}
\DeclareMathOperator{\aff}{aff}
\newcommand{\lmid}{\,\middle|\,}
\newcommand{\B}{\mathcal{B}}

\makeatletter

 \@addtoreset{equation}{section}
\makeatother

\title{Monte Carlo Cubature Construction}

\author[1]{Satoshi Hayakawa\footnote{satoshi\_hayakawa@mist.i.u-tokyo.ac.jp}}

\affil[1]{Graduate School of Information Science and Technology,
The University of Tokyo}

\date{}

\begin{document}

\maketitle

\begin{abstract}
	In numerical integration,
	cubature methods are effective,
	especially when the integrands
	can be well-approximated by known test functions, such as polynomials.
	However, the construction of cubature formulas has not generally been known,
	and existing examples only represent the particular domains of integrands, such as hypercubes and spheres.
	In this study, we show that cubature formulas can be constructed for probability measures
	provided that
	we have an i.i.d. sampler from the measure and the mean values of given test functions.
	Moreover, the proposed method also works as a means of data compression,
	even if sufficient prior information of the measure is not available.
\end{abstract}

\section{Introduction}
	In this study, we consider a method to obtain a
	good numerical integration formula over a probability measure.
	In other words,
	given a random variable $X$ taking values in some abstract space,
	we want an accurate approximation of
	the expectation $\E{f(X)}$ for each integrand $f$.
	
	If there are no conditions for the class of integrands
	(except that $f(X)$ is integrable), it is expected that
	the Monte Carlo method \cite{met49,ham64} should be the best way.
	For i.i.d. copies $X_1,\ldots, X_N, \ldots$ of $X$,
	the sample mean of
	$\frac1N\left(f(X_1)+\cdots+f(X_N)\right)$ is an unbiased estimator of
	$\E{f(X)}$ and, according to the law of large numbers, converges to the expectation with probability $1$.
	If we assume $\E{|f(X)|^2}<\infty$,
	the standard variation of Monte Carlo integration is $\mathrm{O}(\frac1{\sqrt{N}})$.
	Therefore, we can regard this as a numerical integration method
	with error $\mathrm{O}(\frac1{\sqrt{N}})$.
	
	However, in real applications,
	we often assume that the integrand $f$ has good properties, such as
	smoothness, asymptotic behavior, and the decay of coefficients when expanded by a certain basis.
	Indeed, these itemized properties are not independent of each other;
	however, they are possibly suited for different numerical integration methods.
	For instance, if $X$ is a uniform distribution and integrands have a certain smoothness over a hypercube $[0, 1]^s$,
	then the quasi Monte Carlo (QMC) sampling works better than the standard Monte Carlo method \cite{dic13}.
	The points used in QMC are usually taken in a deterministic way,
	but random QMC construction methods have recently been considered by \citeasnoun{hir16} and \citeasnoun{bar16}.
	In particular, the authors of these two papers exploit determinantal point processes,
	which physically model the distribution of repulsing particles.
	
	If $X$ takes values in some Euclidean space,
	and the integrand $f$ can be well-approximated by polynomials,
	cubature (or quadrature) formulas are useful \cite{str71,coo93,EDT}.
	The simplest one is the approximation
	\[
		\int_0^1f(x)\dd x\simeq \frac16f(0)+\frac23f\left(\frac12\right)+\frac16f(1),
	\]
	which is called Simpson's rule.
	This `$\simeq$' is replaced by `$=$' if $f$ is a polynomial with three degrees at most.
	In general, a cubature formula of degree $t$ is composed of points $x_1,\ldots, x_n$
	and weights $w_1,\ldots,w_n>0$ satisfying
	\[
		\E{f(X)}=\sum_{i=1}^nw_if(x_i)
	\]
	for any polynomial $f$ with degrees of $t$ at most.
	The existence of this formula is assured by Tchakaloff's theorem (Theorem \ref{thm:tchakaloff}),
	but the proof is not constructive and, except for concrete cases, construction is currently unknown.
	It is not necessary for us to choose polynomials as test functions,
	so we can consider the generalized cubature formulas
	on spaces and test functions that are non-necessarily Euclidean and polynomial.
	Indeed, such formulas are worthy of consideration; moreover,
	cubature formulas exists for Wiener space \cite{lyo04}.
	As QMC methods and other widely used numerical integration formulas, such as DE (double exponential) methods \cite{tak74},
	are limited to the integration of functions with a certain smoothness and domain, Monte Carlo integration is essentially the only available method for general settings.
	Accordingly, although it is difficult to choose proper test functions, it is valuable to consider general cubature formulas.
	
	In this paper,
	we propose an algorithm to construct generalized cubature formulas that satisfy Tchakaloff's bound
	by subsampling points from i.i.d. samples $X_1, X_2, \ldots$ of $X$.
	Therefore, the proposed method is referred to as Monte Carlo cubature construction.
	This construction is realized by finding a basic feasible solution of a linear programming (LP) problem,
	providing weights of a cubature formula.
	Although the proposed method is simple,
	it may fail if pathological samples $X_1, X_2, \ldots$ appear.
	However, we show that the probability of such cases is zero.
	In other words,
	we show that one can construct a general cubature formula with probability $1$ so long as the i.i.d. sequence
	$X_1, X_2, \ldots$ of $X$ and the expectation of test functions are both given.
	This is the main theoretical result presented by Theorem \ref{thm:main},
	which has been proven by techniques in discrete geometry.
	
	In some cases,
	cubature formulas might exist with fewer points than
	the upper bound given by Tchakaloff's theorem
	($n = t$ in the aforementioned one-dimensional example).
	For example,
	such cubature formulas have been found
	for particular probability measures and on (particular-dimensional hyper) cubes, triangles, circles (spheres), and balls \cite{coo93,EDT}.
	However, our construction is rather general
	as we have almost no restrictions on the probability measure, domain, and test functions.

	Furthermore,
	we also demonstrate that one can construct an approximate cubature formula
	in the absence of knowing test-function expectations.
	This construction of an approximate cubature formula is based on the usual Monte Carlo integration,
	but we can compress the data via construction of an cubature formula.
	
	The rest of this paper is organized as follows:
	in Section \ref{sec:2},
	we briefly review cubature formulas, a subsampling technique, and theorems in discrete geometry;
	in Section \ref{sec:3},
	we present our main results;
	we give simple numerical experiments in Section \ref{sec:4}
	to estimate the time complexity of our method; finally,
	we conclude the paper in Section \ref{sec:5}.


\section{Background}\label{sec:2}

In this section, we outline related studies, including the classic formulation of cubature formulas (and its generalization)
and theorems in discrete geometry that are used in the proofs of our results.

	\subsection{Cubature formula}
	
	Herein, we let $\Omega\subset\R^m$ be a Borel set
	and we consider a probability measure $\mu$ on $\Omega$ (with Borel $\sigma$-field).
	Moreover, for an integer $t\ge0$, we denote by $\PP_t(\Omega)$ the set of all real polynomials over $\Omega$ with degree $t$ at most.
	A {\it cubature formula of degree $t$}
	is a set of points $x_1,\ldots,x_n\in\Omega$ and positive weights $w_1,\ldots,w_n$ such that
	\begin{align}
		\int_\Omega f(x)\dd\mu(x)
		=\sum_{j=1}^n w_jf(x_j)
		\label{eq:cubature-poly}
	\end{align}
	holds for any $f\in\PP_t(\Omega)$.
	We have assumed that each $f\in\PP_t(\Omega)$ is integrable with $\mu$,
	which is satisfied, for instance, if $\Omega$ is bounded or $\mu$ has a rapidly decreasing density.
	Once such a formula is obtained
	$\sum_{j=1}^n w_jf(x_j)$ is a good approximation of $\int_\Omega f(x)\dd\mu(x)$
	for a function $f$ that can be well-approximated by polynomials.
	In this sense, a cubature formula works as roughly compressed data
	of $\mu$; moreover, we call $\sum_{j=1}^n w_j\delta_{x_j}$ a {\it
	cubature}.

	The existence of such a formula is assured by \acite{tch57} and \acite{bay06},
	as outlined below.
	\begin{thm}{\rm(Tchakaloff's theorem)}\label{thm:tchakaloff}
		If every function in $\PP_t(\Omega)$ is integrable with $\mu$;
		i.e.,
		\[
			\int_\Omega|f(x)|\dd\mu(x)<\infty, \qquad f\in\PP_t(\Omega)
		\]
		holds,
		then there exists an integer $1\le n\le \dim\PP_t(\Omega)$,
		points $x_1, \ldots, x_n\in \supp\mu\ (\subset \Omega)$, and weights $w_1,\ldots,w_n>0$
		that can be used to construct a cubature formula of degree $t$ over $(\Omega, \mu)$.
	\end{thm}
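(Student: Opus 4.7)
The plan is to recast the existence claim as a statement in convex geometry and invoke Carathéodory's theorem. Fix a basis $p_1, \dots, p_N$ of $\PP_t(\Omega)$ with $N = \dim \PP_t(\Omega)$, and define the moment map $\Phi : \Omega \to \R^N$ by $\Phi(x) = (p_1(x), \dots, p_N(x))$. The integrability hypothesis makes $m := \int_\Omega \Phi \dd\mu \in \R^N$ well-defined, and by linearity in $f$ a cubature formula of degree $t$ with nodes $x_j \in \supp \mu$ and positive weights $w_j$ is precisely a conic representation $m = \sum_{j=1}^n w_j \Phi(x_j)$.

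The first task is to show that $m$ belongs to the convex cone $C$ generated by $\{\Phi(x) : x \in \supp \mu\}$ inside $\R^N$. Granting this, Carathéodory's theorem for conic hulls immediately produces a representation $m = \sum_{j=1}^n w_j \Phi(x_j)$ with $n \le N$, $w_j > 0$, and $x_j \in \supp \mu$, which is exactly the desired cubature formula.

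For the inclusion $m \in C$, I would begin by establishing the weaker fact $m \in \bar C$ via a separation argument: if $m \notin \bar C$, Hahn--Banach produces a linear functional $\ell(v) = \sum_i c_i v_i$ with $\ell \ge 0$ on $\bar C$ and $\ell(m) < 0$. Setting $f = \sum_i c_i p_i \in \PP_t(\Omega)$, the first inequality, applied to the point-mass generators $\Phi(x)$ of $C$, gives $f \ge 0$ on $\supp \mu$, while the second gives $\int f \dd\mu < 0$. These are incompatible, since $f \ge 0$ on $\supp \mu$ and $\mu$ is concentrated on $\supp \mu$, so $m \in \bar C$.

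The main obstacle is upgrading $m \in \bar C$ to $m \in C$ when $\supp \mu$ is unbounded and $C$ need not be closed; this is precisely the point where Tchakaloff's original compact-support proof does not generalize directly and where Bayer--Teichmann's extension is required. I would handle it by exhausting $\mu$ with its restrictions to a compact increasing sequence $K_\ell \uparrow \Omega$, applying the compact-support version (where $C$ becomes closed, so the separation argument suffices) to each normalized restriction to obtain a cubature of length at most $N$ with nodes in $K_\ell$, and then extracting a limit of the resulting discrete measures. The integrability hypothesis on $\PP_t(\Omega)$ provides enough tail control on $|\Phi|$ to ensure tightness and to identify the moments of the limit with $m$, at which point the surviving atoms and weights give the desired representation. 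Modulo this compactness/limit step, the moment-map reduction and the closing Carathéodory argument are entirely formal.
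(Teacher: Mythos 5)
Your reduction to convex geometry, the separation argument giving $m\in\bar C$, and the closing conic Carath\'eodory step are all sound, and they match the viewpoint the paper itself adopts: the paper does not prove this theorem (it defers to Tchakaloff and Bayer--Teichmann) but explains the same equivalence between cubature construction and placing $\int_\Omega\bm\phi\dd\mu$ in the convex hull of $\bm\phi(\supp\mu)$, followed by Carath\'eodory. The conic formulation is even slightly cleaner than the affine one: it yields $n\le\dim\PP_t(\Omega)$ directly and keeps the nodes in $\supp\mu$ rather than in a closure of the image.

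The genuine gap is the final step, upgrading $m\in\bar C$ to $m\in C$. The compact-exhaustion-and-weak-limit argument does not work as described. Let $\nu_\ell$ be the cubature for the normalized restriction $\mu_\ell:=\mu|_{K_\ell}/\mu(K_\ell)$. What you control is the vector of \emph{signed} moments, $\int\Phi\dd\nu_\ell=\int\Phi\dd\mu_\ell\to m$; the hypothesis that $\PP_t(\Omega)$ is $\mu$-integrable controls $\int|\Phi|\dd\mu_\ell$ but gives no bound at all on $\int|\Phi|\dd\nu_\ell$, since $\nu_\ell$ is only required to reproduce the signed moments of $\mu_\ell$. Hence $\{\nu_\ell\}$ need not be uniformly integrable against $\Phi$: along a subsequence an atom $x_j^{(\ell)}$ can escape to infinity with $w_j^{(\ell)}\to0$ (so tightness of mass is not violated) while $w_j^{(\ell)}p_i(x_j^{(\ell)})$ converges to a nonzero limit for some higher-degree basis polynomial $p_i$. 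The measure carried by the surviving atoms then has the wrong moments, which is precisely the assertion ``identify the moments of the limit with $m$'' that your sketch leaves unproved. This moment-leakage at infinity is the entire difficulty of the non-compact case, and it is why Bayer--Teichmann do not argue by such a limit. The standard repair --- and the ingredient this paper actually proves later, as Lemma \ref{lem:exp-ri} --- is to show \emph{directly}, by a separation argument on the unit sphere of $\aff S$ (no limits of discrete measures), that the moment point lies in $\ri\cv S$ for $S=\supp\mathrm{P}_{\bm\phi(X)}$; since $\ri\cv S\subset\cv S$, one then has an honest convex combination to which Carath\'eodory applies and the distinction between $C$ and $\bar C$ never arises. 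If you wish to keep the conic setup, you need the analogous direct statement that $m$ lies in the relative interior of $C$, proved by separation plus induction on dimension rather than by exhaustion.
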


	Note that $\supp\mu$ is the closed set defined by
	$\supp\mu:=\bigcap\{\Omega\setminus O\mid \text{$O$ is open}, \mu(O)=0\}$.
	Since a nonzero constant function belongs to $\PP_t(\Omega)$, then
	$w_1+\cdots+w_n$ must be equal to $1$.
	
	This theorem can be understood in the context of discrete geometry.
	Indeed, if we define a vector-valued function $\bm\phi:\R^m\to\R^{\dim\PP_t(\Omega)}$
	with a monomial in each component,
	then constructing a cubature formula is equivalent to
	finding a $\bm{y}_1,\ldots, \bm{y}_n\in\im\bm\phi|_{\supp\mu}$ such that
	the convex hull of $\{\bm{y}_1,\ldots,\bm{y}_n\}$ contains the point
	$\int_\Omega\bm\phi(x)\dd\mu(x)\in\R^{\dim\PP_t(\Omega)}$.
	For further arguments from this viewpoint, see
	Section \ref{sec:discrete} or \acite{bay06}.
	
	Tchakaloff's theorem only gives an upper bound of the number of points
	used in a cubature formula.
	There exist, several lower bounds for the number of sampled points $n$.
	The Fisher-type bound is well-known \cite{str71}:
	\begin{thm}{\rm (Fisher-type bound)}
		For any cubature formula of degree $t$ over the measure space
		$(\Omega, \mu)$,
		the number of sample points $n$ satisfies
		$n\ge \dim\PP_{\lfloor t/2\rfloor}(\Omega)$.
	\end{thm}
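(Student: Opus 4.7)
The plan is a short dimension-counting argument combined with a squaring trick. I would argue by contradiction: assume $n < d := \dim \PP_{\lfloor t/2 \rfloor}(\Omega)$ and introduce the evaluation map $E \colon \PP_{\lfloor t/2 \rfloor}(\Omega) \to \R^n$ defined by $E(p) := (p(x_1), \ldots, p(x_n))$. Because the domain has dimension $d$ strictly larger than $n$, rank--nullity produces a nonzero $p \in \PP_{\lfloor t/2 \rfloor}(\Omega)$ with $E(p) = 0$; that is, $p$ vanishes at every cubature node.

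Next I would square. Since $\deg p^2 \leq 2 \lfloor t/2 \rfloor \leq t$, we have $p^2 \in \PP_t(\Omega)$, so the cubature exactness (\ref{eq:cubature-poly}) applies to $p^2$ and yields
\[
\int_\Omega p(x)^2 \dd\mu(x) \;=\; \sum_{j=1}^n w_j\, p(x_j)^2 \;=\; 0.
\]
As the integrand is non-negative and each $w_j > 0$, this forces $p = 0$ $\mu$-a.e.

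The final step, the only one needing care, is to conclude that $p = 0$ as an element of $\PP_{\lfloor t/2 \rfloor}(\Omega)$, which contradicts the choice of $p$ and closes the argument. The natural reading of $\PP_t(\Omega)$ in the presence of a fixed integrating measure is as the image of polynomial expressions in $L^2(\mu)$, so that elements agreeing $\mu$-a.e.\ are identified; this identification is already implicit in Tchakaloff's theorem, since otherwise the bound $\dim \PP_t(\Omega)$ would overcount when $\supp\mu$ is low-dimensional or finitely supported. Under it, $p = 0$ in $L^2(\mu)$ means $p = 0$ in $\PP_{\lfloor t/2 \rfloor}(\Omega)$ and we are done. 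I expect this identification convention to be the main---and essentially the only---subtle point; the combinatorial core, kernel plus squaring, is a one-line observation.
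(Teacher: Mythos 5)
The paper does not prove this theorem: it is quoted as a well-known result with a citation to \acite{str71}, so there is no in-paper argument to compare against. Your kernel-plus-squaring proof is the standard one and it is correct. The evaluation map $E\colon \PP_{\lfloor t/2\rfloor}(\Omega)\to\R^n$ has nontrivial kernel when $n<\dim\PP_{\lfloor t/2\rfloor}(\Omega)$, the square of a kernel element lies in $\PP_t(\Omega)$ because $2\lfloor t/2\rfloor\le t$, and exactness of the formula on $p^2$ together with $w_j>0$ forces $\int_\Omega p^2\dd\mu=0$, hence $p=0$ $\mu$-a.e.

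You are also right that the only delicate point is the dimension convention, and you resolve it the right way. As literally stated the bound is false if $\PP_{\lfloor t/2\rfloor}(\Omega)$ is taken as a space of polynomial expressions: for $\mu=\delta_{1/2}$ on $\Omega=[0,1]$ the single node $x_1=1/2$, $w_1=1$ is a cubature of every degree, yet $\lfloor t/2\rfloor+1>1$ for $t\ge2$. The Fisher-type bound is meant with $\dim\PP_{\lfloor t/2\rfloor}(\Omega)$ read as the dimension of the image of these polynomials in $L^2(\mu)$ (equivalently, of their restrictions to $\supp\mu$ modulo $\mu$-null sets), and under that reading your final step closes the contradiction. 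One small inaccuracy in your closing remark: this identification is not actually forced by Tchakaloff's theorem, since the upper bound $n\le\dim\PP_t(\Omega)$ remains valid (merely non-sharp) without it; but nothing in your argument depends on that aside.
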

	However, this study is concerned with finding at least one
	cubature formula for general settings;
	in other words, minimizing the number of sample points is not the objective of this paper.
	See \acite{EDT} for further general theories and examples
	of the cubature.
	
	\subsection{Carath\'{e}odory-Tchakaloff subsampling}
	
	Constructing a cubature formula
	is generally not an easy task.
	However, since the case $\mu$ is a product measure of
	low-dimensional measures (typically one-dimensional),
	we can easily construct a grid-type cubature formula.
	Although the proposition outlined below is on the product measures of
	the same low-dimensional measure,
	it can be generalized for
	the product measure of different low-dimensional measures.
	\begin{prop}\label{prop:classic-cubature}
		Let points $x_1, \ldots, x_n\in \Omega$ and weights $w_1,\ldots, w_n>0$
		make a cubature of degree $t$.
		on $(\Omega, \mu)$.
		Then, on the $k$-fold product measure space
		$(\Omega^{\otimes k}, \mu^{\otimes k})$,
		\[
			\sum_{(j_1,\ldots,j_k)\in\{1,\ldots,n\}^k}
			w_{j_1}\cdots w_{j_k} \delta_{(x_{j_1},\ldots,x_{j_k})}
		\]
		is a cubature of degree $t$.
	\end{prop}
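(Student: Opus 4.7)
My plan is to reduce the claim, by linearity of integration and of the weighted sum, to the case where the test function is a single monomial on $\Omega^{\otimes k}\subset(\R^m)^k$. Write a monomial of total degree at most $t$ as
\[
  F(y_1,\ldots,y_k)=g_1(y_1)\cdots g_k(y_k),
\]
where each $g_i$ is a monomial on $\Omega$. Since the total degree of $F$ is at most $t$, each individual $g_i$ has degree at most $t$, so every $g_i\in\PP_t(\Omega)$ and the hypothesis applies to each factor.

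Next I would invoke Fubini's theorem (applicable because each $g_i$ is $\mu$-integrable by assumption, and hence $F$ is $\mu^{\otimes k}$-integrable) to factor the integral, then apply the given cubature formula in each variable separately:
\[
  \int_{\Omega^{\otimes k}}F\dd\mu^{\otimes k}
  =\prod_{i=1}^k\int_{\Omega}g_i(y)\dd\mu(y)
  =\prod_{i=1}^k\sum_{j_i=1}^n w_{j_i}g_i(x_{j_i}).
\]
Expanding this product of sums gives exactly
\[
  \sum_{(j_1,\ldots,j_k)\in\{1,\ldots,n\}^k} w_{j_1}\cdots w_{j_k}\,g_1(x_{j_1})\cdots g_k(x_{j_k})
  =\sum_{(j_1,\ldots,j_k)} w_{j_1}\cdots w_{j_k}F(x_{j_1},\ldots,x_{j_k}),
\]
which is the desired cubature identity for $F$. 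Extending to arbitrary $f\in\PP_t(\Omega^{\otimes k})$ by linearity completes the argument.

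There is no real obstacle here; the only point deserving care is the decomposition step, namely observing that if a monomial on $\R^{mk}$ has total degree at most $t$, then each of its $k$ factor-monomials (one per block of $m$ coordinates) individually has degree at most $t$, so the one-dimensional hypothesis applies to each factor. The rest is bookkeeping via Fubini and the distributive law.
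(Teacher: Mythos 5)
Your argument is correct, and it is the standard one: the paper states this proposition without an explicit proof, treating it as a routine consequence of exactly the factorization you describe. Reducing by linearity to a monomial $F=g_1\cdots g_k$ of total degree at most $t$, noting that each block factor $g_i$ then lies in $\PP_t(\Omega)$, and applying Fubini (justified by the assumed $\mu$-integrability of every element of $\PP_t(\Omega)$, whence Tonelli gives $\mu^{\otimes k}$-integrability of $F$) together with the distributive law is precisely the intended argument, so there is nothing to add.
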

	This construction uses $n^k$ points
	and this is likely to be much larger than the Tchakaloff upper bound
	$\dim \PP_t(\Omega^{\otimes k})$.
	We can reduce the number of points used in such a cubature formula
	by using a LP problem.
	A detailed study on this direction (with generalization)
	is given by \acite{tch15}.
	To explain the idea, the situation is generalized.	
	Instead of polynomials,
	we can naturally take any sequence of basis functions
	to generalize the definition of a cubature formula given in the previous section.
	Indeed, if we know the class of functions that require integration,
	there might be more optimal test functions to use than polynomials.
	Accordingly, the definition outlined below is appropriate.

	\begin{dfn}\label{def:gen-cubature}
		Given a random variable $X$ taking values in a measurable space $\X$
		and $d$ test functions $\phi_1, \ldots, \phi_d: \X\to\R$
		that satisfy the integrability $\E{|\phi_i(X)|}<\infty$ for each $i=1,\ldots, d$,
		a {\it cubature} with respect to $X$ and $\phi_1,\ldots,\phi_d$
		can be described as a set of points $x_1, \ldots, x_n\in\X$ and positive weights $w_1, \ldots, w_n$
		that satisfies
		\begin{align}
			\E{\phi_i(X)}=\sum_{j=1}^n w_j\phi_i(x_j),\qquad
			i=1,\ldots, d.
			\label{eq:cubature-1}
		\end{align}
	\end{dfn}
	If we write $\bm\phi = (\phi_1, \ldots, \phi_d)^\top: \X\to\R^d$, (\ref{eq:cubature-1}) can be equivalently rewritten as
	\begin{align}
	\E{\bm\phi(X)}=\sum_{j=1}^nw_j\bm\phi(x_j)
	\end{align}
	
	For this generalization of the cubature formula,
	we have the generalized Tchakaloff's theorem \cite{bay06,EDT}.
	\begin{thm}\label{thm:gen-tchakaloff}
	{\rm(Generalized Tchakaloff's theorem)}
		Under the same setting as Definition \ref{def:gen-cubature},
		there exists a cubature formula with $n\le d+1$.
		Moreover, if there exists a vector $c\in\R^d$ such that
		$c^\top\bm\phi(X)$ is essentially a nonzero constant,
		there exists a cubature formula satisfying $n\le d$ and
		$w_1+\cdots+w_n=1$.
	\end{thm}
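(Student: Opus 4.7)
The plan is to recast the existence of a cubature formula as a geometric problem: writing $m := \E{\bm\phi(X)} \in \R^d$ as a convex combination with positive weights of finitely many points in the image $\bm\phi(\X) \subset \R^d$. Once this is done, Carath\'eodory's theorem will control the number of points and yield $n \le d+1$.

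The crux is the lemma $m \in \cv(\bm\phi(\X))$, which I would prove by induction on $d$. Suppose for contradiction $m \notin \cv(\bm\phi(\X))$. By the Hahn--Banach separation theorem applied to the point $m$ and the convex set $\cv(\bm\phi(\X)) \subset \R^d$, there exist a nonzero $c \in \R^d$ and $c_0 \in \R$ with $c^\top \bm\phi(x) \le c_0$ for every $x \in \X$ and $c^\top m \ge c_0$. Taking expectations in the first inequality gives $c^\top m = \E{c^\top \bm\phi(X)} \le c_0$, forcing $c^\top m = c_0$. The nonnegative random variable $c_0 - c^\top \bm\phi(X)$ then has zero expectation, so $c^\top \bm\phi(X) = c_0$ almost surely. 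After discarding the null event, $\bm\phi$ may be regarded as taking values in the hyperplane $H = \{y \in \R^d : c^\top y = c_0\}$, and the induction hypothesis applied inside the $(d-1)$-dimensional affine space $H$ yields $m \in \cv(\bm\phi(\X) \cap H) \subset \cv(\bm\phi(\X))$, a contradiction. The base case $d = 0$ is immediate.

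With the lemma in hand, Carath\'eodory's theorem expresses $m = \sum_{j=1}^n w_j \bm\phi(x_j)$ with $n \le d+1$, $x_j \in \X$, $w_j > 0$, and $\sum w_j = 1$; this is precisely the desired cubature. For the refined statement, the hypothesis that $c^\top \bm\phi(X) \equiv c_\ast$ is essentially a nonzero constant confines the image of $\bm\phi$ almost surely to the $(d-1)$-dimensional affine subspace $H_\ast = \{y : c^\top y = c_\ast\}$, so repeating the argument inside $H_\ast$ gives a cubature with $n \le d$. Pairing the identity $m = \sum_j w_j \bm\phi(x_j)$ with $c$ yields $c_\ast = c_\ast \sum_j w_j$, and since $c_\ast \ne 0$ we conclude $\sum_j w_j = 1$.

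The main obstacle I anticipate is the lemma itself: one must show $m$ lies in the convex hull rather than only in its closure, since Carath\'eodory needs a genuine finite convex representation. The inductive reduction via hyperplane separation is the clean way to sidestep any issue with non-closed convex hulls of unbounded images; routine but unavoidable care is needed in identifying $H$ with $\R^{d-1}$ and in verifying that removing a null set from $\X$ does not affect any subsequent expectation or support computation.
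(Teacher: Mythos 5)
Your proof is correct, and it is actually more self-contained than what the paper provides: the paper only sketches this theorem, asserting that $\E{\bm\phi(X)}\in\cv\supp\mathrm{P}_{\bm\phi(X)}$ ``can be shown'' and deferring to the cited references, with the related Lemma \ref{lem:exp-ri} later proving a stronger relative-interior statement by a different argument. The skeleton is the same in both cases --- establish that the mean lies in a suitable convex hull, invoke Carath\'eodory, and use affine invariance to drop from $d+1$ to $d$ points when $c^\top\bm\phi(X)$ is a nonzero constant --- but your key lemma differs in two respects. First, you work with $\cv\bm\phi(\X)$ rather than $\cv\supp\mathrm{P}_{\bm\phi(X)}$; this is exactly what the theorem as stated requires (points $x_j\in\X$), and it sidesteps the fact that points of the support need not lie in the image of $\bm\phi$, though it does not deliver the extra property, noted in the paper right after the theorem, that the $\bm\phi(x_j)$ may be taken in $\supp\mathrm{P}_{\bm\phi(X)}$. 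Second, your membership proof is an induction on the ambient dimension: separate, note that the nonnegative variable $c_0-c^\top\bm\phi(X)$ has zero mean and hence vanishes almost surely, and restrict to the resulting hyperplane. This is clean and complete; you do not even need the separation to be proper, since if $\bm\phi(\X)$ lies entirely in the separating hyperplane the dimension still drops and the induction hypothesis applies. By contrast, the paper's Lemma \ref{lem:exp-ri} extracts a limiting separating direction by compactness of the unit sphere of the affine hull, because it needs the stronger conclusion $\E{\bm\phi(X)}\in\ri\cv\supp\mathrm{P}_{\bm\phi(X)}$ for Theorem \ref{thm:main}; your inductive argument would not yield relative interiority, but that is not needed for the statement at hand.
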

	
	We can take $x_1,\ldots,x_n\in\X$ so as to
	meet $\bm\phi(x_1),\ldots,\bm\phi(x_n)
	\in\supp\mathop\mathrm{P}_{\bm\phi(X)}$, where $\mathop\mathrm{P}_{\bm\phi(X)}$ is the distribution of $\bm\phi(X)$
	over $\R^d$,
	which corresponds to the condition
	$x_1,\ldots,x_n\in\supp\mu$ in Theorem \ref{thm:tchakaloff}.
	This theorem is essentially a direct consequence of
	Carath\'{e}odory's theorem (Theorem \ref{thm:car}).
	
	\begin{rem}\label{rem:rigorous}
		We suppose that there is a probability space $(\Omega, \F_\Omega, \mathrm{P})$
		 $\X$ has a natural $\sigma$-field $\F_\X$,
		and that the random variable $X$ is a measurable map from $(\Omega, \F_\Omega)$ to $(\X, \F_\X)$.
		In addition, we also assume that the test function $\bm\phi$ is a measurable map from $(\X, \F_\X)$
		to $(\R^d, \B(\R^d))$.
		Accordingly, $\bm\phi(X)$ is a random variable on $\R^d$
		(measurable map from $(\Omega, \F)$ to $(\R^d, \B(\R^d))$).
		Therefore, the support of its distribution
		\[
			\supp\mathrm{P}_{\bm\phi(X)}=\{ x\in\R^d \mid \mathrm{P}(\bm\phi(X)\in A)>0\
			\text{for an arbitrary open neighborhood $A$ of $x$} \}
		\]
		coincides with the smallest closed set $B$ satisfying $\mathrm{P}(\bm\phi(X)\in B)=1$.
	\end{rem}
	
	We introduce
	a technique called {\it Carath\'{e}odory-Tchakaloff subsampling} \cite{pia17},
	which reduces the number of points in a discrete measure.
	This is briefly explained below, where the argument is limited to probability measures.
	\begin{algo}{(Carath\'{e}odory-Tchakaloff subsampling)}
	\label{algo:c-t}
	Here, we explain a method to obtain
	a compressed discrete measure $\hat\mu_\mathrm{CT}$
	from a given discrete measure $\hat\mu$.
	\begin{itemize}
		\item[(1)]
			We have some discrete probability
			measure $\hat\mu$ on some space $\X$,
			which can be written as
			\[
				\hat\mu=\sum_{x\in F} a_x \delta_{x}
			\]
			for some finite set $F\subset \X$ and $a_x>0$.
			A typical case is that the measure $\hat\mu$
			is already an approximation
			of some non-discrete measure.
		\item[(2)]
			Consider some test functions $\phi_1,\ldots,\phi_d
			\in L^1(X, \hat\mu)$
			that we want to integrate.
			For the purposes of simplicity, we assume $\phi_1\equiv 1$.
			In the case $\X$ is Euclidean space, the natural choice of $\phi_i$ is a polynomial.
			
		\item[(3)]
			If $|F|$ is much larger than $d$,
			we can reconstruct a discrete measure
			\[
				\hat\mu_\mathrm{CT}=\sum_{x\in G} b_x\delta_x
			\]
			with $G\subset F$, $|G|\le d$, and $b_x>0$,
			the existence of which is assured by Theorem \ref{thm:gen-tchakaloff}.
			The obtained measure $\hat\mu_\mathrm{CT}$
			is equal to $\hat\mu$ in integrating
			test functions $\phi_1,\ldots,\phi_d$,
			so it can be regarded as a compression of $\hat\mu$.
	\end{itemize}
	\end{algo}
	\acite{pia17} gives a brief survey on this method.
	This resampling method has
	recently been developed in different backgrounds,
	such as numerical quadrature and stochastic analysis
	\cite{huy09,lit12,ryu15,som15,bit16}.
	To use this method,
	we must constructively obtain $\hat\mu_\mathrm{CT}$.
	Indeed, this measure is obtained as a basic feasible solution
	of the following LP problem with a trivial
	objective function:
	\[
		\begin{array}{rl}
			\text{minimize} & 0 \\
			\text{subject to} & A\bm{z}=\bm{b},\ \bm{z}\ge\bm0,
		\end{array}
	\]
	where $A\in\R^{d\times F}$ and $\bm{b}\in\R^{d}$
	are defined as
	\[
		A:=\left[
			\bm\phi(x)
		\right]_{x\in F}
		\qquad
		\bm{b}:=\int_\X \bm\phi(x) \dd\hat\mu(x),
		\qquad
		\bm\phi:=(\phi_1,\ldots,\phi_d)^\top.
	\]
	\acite{tch15} compares the simplex method and
	the singular-value decomposition method
	in solving this problem.
	Note that this Carath\'{e}odory-Tchakaloff subsampling
	scheme is applicable to the reduction of points in (classical)
	cubature formulas, as in Proposition \ref{prop:classic-cubature}.
	
	\begin{rem}\label{rem:CT}
		During the process of the above subsampling,
		we do not exploit the entire information of the measure $\hat\mu$.
		Rather, we only use the set $F$ and the expectation vector $\bm{b}$.
		Therefore, the above method is also applicable to the problem outlined below.
		\begin{quote}
			Given a finite set $F$,
			a vector-valued function $\bm\phi:\X\to\R^d$, and a vector $\bm{b}\in\R^d$,
			find a discrete measure $\hat\mu_\mathrm{CT}=\sum_{x\in G}b_x\delta_x$
			such that
			\begin{itemize}
				\item
					$G$ is a subset of $F$ with at most $d$ elements;
				\item
					$\int_\X \bm\phi(x)\dd\hat\mu_\mathrm{CT}(x)=\bm{b}$ holds.
			\end{itemize}
		\end{quote}
		In this formulation, the problem is feasible if, and only if,
		$\bm{b}$ is contained in the convex hull of the set
		$\{\bm\phi(x)\mid x\in F\}$.
	\end{rem}
	
	In this paper, we are primarily interested in the construction of cubature formulas
	when the larger discrete measure $\hat\mu$ is not given.
	Accordingly, we also use the terminology "Carath\'{e}odory-Tchakaloff subsampling"
	in the sense of Remark \ref{rem:CT}.
	The precise explanation of our problem formulation is given in
	Section \ref{sec:3}.
	
	
	\subsection{Preliminaries from discrete geometry}\label{sec:discrete}
	
	Let us introduce some useful notions and assertions in discrete geometry
	(see \acite{mat02} for details).
	For $S\subset\R^d$,
	the convex hull of $S$ (denoted by $\cv S$) is defined as
	\[
		\cv S:=\left\{\sum_{i=1}^m w_ix_i \lmid
		m\ge1,\ w_i>0,\ \sum_{i=1}^mw_i=1,\ x_i\in S\right\}.
	\]
	The following theorem is a well-known result originally outlined by \acite{car07}.
	\begin{thm}{\rm(Carath\'{e}odory's Theorem)}\label{thm:car}
		If $S\subset\R^d$ and $x\in\cv S$,
		then $x\in\cv T$ for some $T\subset S$ with $|S|\le d+1$.
	\end{thm}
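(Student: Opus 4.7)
The plan is to argue by extremality: take a convex representation of $x$ with the minimum possible number of points and show that this minimum cannot exceed $d+1$. Concretely, since $x \in \operatorname{conv} S$, write
\[
  x = \sum_{i=1}^m w_i x_i, \qquad x_i \in S,\ w_i > 0,\ \sum_{i=1}^m w_i = 1,
\]
and choose such a representation with $m$ as small as possible. Then $T = \{x_1,\ldots,x_m\}$ is the candidate subset, and the goal becomes proving $m \le d+1$.

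The key step is a dimension-counting argument to derive a contradiction if $m \ge d+2$. Under that assumption, the $m-1 \ge d+1$ difference vectors $x_2 - x_1, \ldots, x_m - x_1$ lie in $\R^d$ and must be linearly dependent. Extracting scalars yields $\lambda_1,\ldots,\lambda_m$, not all zero, with
\[
  \sum_{i=1}^m \lambda_i x_i = 0 \quad\text{and}\quad \sum_{i=1}^m \lambda_i = 0.
\]
Because the $\lambda_i$ sum to zero but are not all zero, at least one is strictly positive.

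Now I would perturb the convex combination along this null direction. For every $s \in \R$,
\[
  x = \sum_{i=1}^m (w_i - s\lambda_i) x_i, \qquad \sum_{i=1}^m (w_i - s\lambda_i) = 1.
\]
Set $s^\star = \min\{ w_i/\lambda_i : \lambda_i > 0 \} > 0$. For every index $i$ with $\lambda_i > 0$ we still have $w_i - s^\star \lambda_i \ge 0$ by the choice of $s^\star$, and for $\lambda_i \le 0$ the quantity $w_i - s^\star \lambda_i \ge w_i > 0$ trivially. Moreover, at least one index $j$ (the minimizer) achieves $w_j - s^\star \lambda_j = 0$. Discarding that index gives a convex combination expressing $x$ using at most $m - 1$ points of $S$, contradicting the minimality of $m$. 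Hence $m \le d+1$, which is the required bound (with $T \subset S$, $|T| \le d+1$).

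There is no substantial obstacle here; this is the classical proof. The only detail requiring care is the sign bookkeeping for $s^\star$, ensuring simultaneously that the perturbed weights remain nonnegative and that some weight actually hits zero. Choosing the minimum over the indices with $\lambda_i > 0$ accomplishes both, and the nonempty-ness of that index set is guaranteed by the fact that $\sum \lambda_i = 0$ with the $\lambda_i$ not all zero.
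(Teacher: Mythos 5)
Your proof is correct: it is the classical minimal-representation argument for Carath\'{e}odory's theorem (affine dependence among $m\ge d+2$ points, perturb along the null direction until a weight vanishes), and the sign bookkeeping for $s^\star$ is handled properly, including the observation that some $\lambda_i>0$ because the $\lambda_i$ sum to zero without all vanishing. The paper itself gives no proof of this statement --- it cites it as a classical result of Carath\'{e}odory --- so there is nothing to compare against; note only that the bound in the paper's statement should read $|T|\le d+1$ rather than $|S|\le d+1$, and your conclusion correctly states the former.
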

	According to this assertion, Tchakaloff's theorem
	(Theorem \ref{thm:gen-tchakaloff}) is essentially a straightforward consequence.
	Indeed, we can show that
	$\E{\bm\phi(X)} \in \cv\supp\mathrm{P}_{\bm\phi(X)}$
	under the same condition as in Theorem \ref{thm:gen-tchakaloff},
	and there exists $x_1,\ldots, x_n\in\X$ such that $n\le d+1$ and
	$\bm\phi(x_1),\ldots,\bm\phi(x_n)\in\supp\mathrm{P}_{\bm\phi(X)}$.
	As to whether $x\in \cv T$ holds is invariant
	under affine transformations,
	if we additionally assume
	$1\in \mathop\mathrm{span}\{\phi_1,\ldots,\phi_d\}$,
	then we can reduce the number of dimensions in Theorem \ref{thm:car}
	and, accordingly, a cubature formula can be obtained with $n\le d$.

	Consider point sets in $\R^d$.
	For a set $S\subset \R^d$, the {\it affine hull} of $S$
	is defined as
	\[
		\aff S:=\left\{
			\sum_{i=1}^m w_ix_i \lmid
			m\ge1,\ w_i\in\R,\ \sum_{i=1}^m w_i=1,\ x_i\in S
		\right\}.
	\]
	Notice that $\aff S$ is the smallest affine subspace
	(shifted linear subspace) including $S$ (and $\cv S$).
	Using the notion of affine hull,
	we can define the {\it relative interior} of $S\subset\R^d$ as
	\[
		\ri S
		:=\left\{
			x\in S \lmid
			\exists\,\ve>0\ \text{s.t.}\
			y\in\aff S, \|y-x\|<\ve \Longrightarrow y\in S
		\right\},
	\]
	which is the interior of $S\subset\aff S$
	under the relative topology of $\aff S$.
	The theorem outlined below is a generalization of Carath\'{e}odory's Theorem
	using the notion of relative interior.
	\begin{thm}{\rm\cite{ste16,bon63}}\label{thm:ri}
		Let a set $S\subset\R^d$
		satisfy that $\aff S$ is a $k$-dimensional affine subspace of $\R^d$.
		Then,
		for arbitrary point $x\in\ri\cv S$,
		there exists some $T\subset S$ that satisfies
		$\aff T=\aff S$, $x\in\ri\cv T$ and $|T|\le 2k$.
	\end{thm}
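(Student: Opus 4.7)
The plan is a three-stage reduction: collapse to the ambient dimension, produce a finite candidate set, and shrink it via an inclusion-minimality argument. Writing $k = \dim \aff S$, I would first translate $x$ to the origin and identify $\aff S$ with $\R^k$; the relative interior then coincides with the ordinary interior, so the hypothesis becomes $0 \in \intr \cv S \subset \R^k$ and the target is $T \subset S$ with $|T| \le 2k$, $\aff T = \R^k$, and $0 \in \intr \cv T$.

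Next, to get down to finitely many points, I would inscribe a small $k$-dimensional simplex containing $0$ in its interior inside $\cv S$; each of its $k + 1$ vertices is a convex combination of finitely many points of $S$, and the union of all those points is a finite $T_0 \subset S$ with $0 \in \intr \cv T_0$. Then I would choose $T \subset T_0$ inclusion-minimal subject to $0 \in \intr \cv T$. Since $0 \in \intr \cv T$, one automatically has $\aff T = \R^k$, so only the estimate $|T| \le 2k$ remains.

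For the cardinality bound, I would combine minimality with hyperplane separation. For each $y \in T$, removing $y$ destroys the interior property, so standard convex separation in $\R^k$ produces a nonzero linear functional $f_y$ with $f_y \le 0$ on $T \setminus \{y\}$; the condition $0 \in \intr \cv T$ prohibits $T$ from lying in a single closed half-space through $0$, which forces $f_y(y) > 0$ strictly. The $|T| \times |T|$ matrix $M_{y,z} := f_y(z)$ then has a strictly positive diagonal and non-positive off-diagonal entries, while the functionals $f_y$ all lie in the $k$-dimensional dual space $(\R^k)^*$. Extracting a nontrivial linear dependence among the $f_y$ (available as soon as $|T| > k$) and splitting it into its positive and negative parts, one reads off, through the sign pattern of $M$ applied to each generator $y$, that $|T| \le 2k$.

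I expect this last step to be the main obstacle. The bound is sharp — the $k$-dimensional cross-polytope $\{\pm e_1, \ldots, \pm e_k\}$ has $0$ in its interior and is inclusion-minimal with exactly $2k$ points — so no purely dimensional count can suffice; the full combinatorial content of the sign pattern of $M$ must be used, essentially via the intuition that every $y$ demands a strict ``antipode'' across the supporting hyperplane $\ker f_y$, and that these antipodal pairings can fit into at most $k$ independent directions. This is the content of the arguments of \cite{ste16,bon63}, which I would either invoke directly or reproduce within this controlled minimality setting.
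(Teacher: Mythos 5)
First, a point of comparison: the paper does not prove Theorem \ref{thm:ri} at all --- it is imported from the cited sources --- so there is no in-paper argument to measure yours against. Judged on its own, your first two stages are correct: the reduction to $0\in\intr\cv S\subset\R^k$, the passage to a finite $T_0$ via a small simplex whose vertices are finite convex combinations of points of $S$, the choice of an inclusion-minimal $T\subset T_0$, and the observation that $0\in\intr\cv T$ forces $\aff T=\R^k$. The separation step is also sound: minimality gives, for each $y\in T$, a nonzero $f_y$ with $f_y\le0$ on $T\setminus\{y\}$ and $f_y(y)>0$ (the latter because otherwise $f_y\le0$ on all of $\cv T$, contradicting $0\in\intr\cv T$). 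Note that your route differs from the classical proof in the references, which argues by induction on dimension (Carath\'eodory gives $m$ points of $S$ whose hull contains $0$ in its relative interior and spans an $(m-1)$-dimensional subspace $L$; one then passes to $\R^k/L$ and recurses, with the count $m+2(k-m+1)\le 2k$). Your version, if completed, proves the stronger assertion that \emph{every} inclusion-minimal finite subset has at most $2k$ points, which the inductive proof does not directly deliver.

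The last step as written is, however, a genuine gap: a single linear dependence $\sum_{y}\lambda_y f_y=0$ does not ``read off'' $|T|\le 2k$. What the sign pattern actually yields from one dependence is this: putting $g=\sum_{\lambda_y>0}\lambda_y f_y=\sum_{\lambda_y<0}(-\lambda_y)f_y$, every $z\in T$ misses at least one of the two index sets, so $g(z)\le0$ on $T$, and $0\in\intr\cv T$ then forces $g=0$; hence the positive and negative parts of any dependence are separately dependences, and evaluating such a nonnegative dependence at a point $z$ of its own support (positive diagonal term, nonpositive off-diagonal terms) shows its support has at least two elements. From one dependence this gives only $|T|\ge2$, not an upper bound. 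To close the argument you must use the entire kernel $V$ of $\lambda\mapsto\sum_y\lambda_y f_y$, whose dimension is at least $|T|-k$: the fact just derived says $V$ is closed under taking positive parts, hence is a linear sublattice of $\R^T$ and therefore admits a basis of nonnegative vectors with pairwise disjoint supports; each support has at least two elements, so $2(|T|-k)\le 2\dim V\le|T|$, i.e. $|T|\le 2k$. The disjoint-support decomposition is a standard but nontrivial lemma that needs its own short proof (take a nonzero nonnegative element of $V$ of minimal support and split $V$ off along it). With that supplied your argument is complete and matches the sharp cross-polytope example; without it, the bound does not follow from what you wrote.
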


	The well-known result outlined below is useful.
	It can be found in standard textbooks on discrete geometry, convex analysis, and functional analysis.
	\begin{thm}{\rm(separation theorem)}
		Let $A$ and $B$ be convex subsets of $\R^d$ satisfying $A\cap B=\emptyset$.
		Then, there exists a hyperplane $H$ such that $A$ and $B$ are included
		in different closed-half spaces defined by $H$.
		In other words,
		there exists a unit vector $c\in\R^d$ and a real number $z\in\R$ such that
		$c^\top x \ge z$ holds for all $x\in A$ and
		$c^\top y \le z$ holds for all $y\in B$.
	\end{thm}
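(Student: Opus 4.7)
The plan is to reduce the two-set separation problem to separating a single convex set from the origin via the Minkowski difference. Set $C := \{a - b : a \in A,\ b \in B\}$; this is convex as a linear image of the convex set $A \times B$, and $A \cap B = \emptyset$ forces $0 \notin C$. If I produce a unit vector $c \in \R^d$ with $c^\top x \ge 0$ for every $x \in C$, then $c^\top a \ge c^\top b$ for all $a \in A$ and $b \in B$, and any $z$ with $\sup_{b \in B} c^\top b \le z \le \inf_{a \in A} c^\top a$ yields the required half-space inequalities.

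To construct $c$, I first handle the easier case $0 \notin \bar C$. The map $x \mapsto \|x\|^2$ attains a minimum on the nonempty closed convex set $\bar C$ at a unique point $p$: existence follows by coercivity (intersect $\bar C$ with a large closed ball to get a compact set and apply Weierstrass), and uniqueness from strict convexity of the norm squared. For any $q \in \bar C$ and $t \in (0,1]$, convexity gives $p + t(q-p) \in \bar C$, so $\|p + t(q-p)\|^2 \ge \|p\|^2$; expanding and letting $t \to 0^+$ yields $p^\top (q - p) \ge 0$, hence $p^\top q \ge \|p\|^2 > 0$. Setting $c := p/\|p\|$ closes this case with strict separation.

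The remaining case $0 \in \bar C$ — where the origin must sit on the boundary of $\bar C$, since $0 \notin C$ — is the main obstacle, and I would treat it by approximation. Choose a sequence $\xi_n \to 0$ with $\xi_n \notin \bar C$, apply the previous step to the translated closed convex set $\bar C - \xi_n$ to obtain unit vectors $c_n$ satisfying $c_n^\top (q - \xi_n) \ge 0$ for all $q \in \bar C$, and use compactness of the unit sphere to extract a convergent subsequence $c_{n_k} \to c$; passing to the limit gives $c^\top q \ge 0$ for all $q \in \bar C \supseteq C$. The only delicate point is the existence of the $\xi_n$: this is direct when $\bar C$ has nonempty interior in $\R^d$, while if $\bar C$ has empty interior then $C$ lies in a proper affine subspace $L \subsetneq \R^d$, in which case one either picks $c$ to be a unit normal to the linear direction of $L$ oriented correctly (using $0 \notin C$ to fix the sign when $0 \notin L$, or applying the argument inductively inside $L$ when $0 \in L$). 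In either subcase a separating unit vector emerges, completing the proof.
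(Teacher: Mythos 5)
The paper does not actually prove this statement; it is quoted as a standard result to be found in textbooks on convex analysis, so there is no internal proof to compare against. Your argument is the classical textbook proof: pass to the Minkowski difference $C = A - B$, separate $0$ from $C$ by the nearest-point projection when $0 \notin \bar C$, and handle $0 \in \bar C \setminus C$ by translating slightly and extracting a convergent subsequence of unit normals. The individual steps check out: the variational inequality $p^\top q \ge \|p\|^2 > 0$ is derived correctly, the compactness argument on the unit sphere passes to the limit as claimed, and the lower-dimensional case is disposed of correctly (indeed, when $0 \in L \supseteq C$ you do not even need induction: any unit normal to $L$ gives $c^\top q = 0$ for all $q \in C$, which suffices because the theorem only demands closed half-spaces, not proper separation). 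The one place where you lean on an unstated lemma is the claim that $0 \in \bar C$ together with $0 \notin C$ forces $0$ onto the boundary of $\bar C$, equivalently that points $\xi_n \notin \bar C$ accumulate at $0$ in the full-dimensional case; this requires the standard fact that for a convex set whose closure has nonempty interior one has $\intr \bar C = \intr C \subseteq C$, proved via the line-segment principle. That fact is a genuine ingredient of every proof of the separating hyperplane theorem and deserves a sentence, but it is standard and does not undermine the argument. You should also state at the outset that $A$ and $B$ are taken to be nonempty, since otherwise $\inf_{a\in A} c^\top a$ or $\sup_{b\in B} c^\top b$ need not pin down a finite $z$ (and the statement itself degenerates).
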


\section{Cubature Construction Problems}\label{sec:3}
	
	\subsection{Problem setting}
	
	In this study, we consider two different problems,
	both of which have a random variable $X$ taking values in $\X$
	and a vector-valued function $\bm\phi:\X\to\R^d$
	with the condition $\E{\|\bm\phi(X)\|}<\infty$.
	For simplicity, we additionally assume that the first element of $\phi$
	is identically $1$.
	For both problems,
	we assume we can sample i.i.d. copies $X_1,X_2,\ldots$ of $X$.
	\begin{itemize}
		\item[(P1)]
			\textbf{Exact cubature problem}:
			Assuming we can calculate the exact value of $\E{\bm\phi(X)}$,
			find $n$ $(\le d)$ points $x_1,\ldots,x_n\in\X$
			and weights $w_1,\ldots,w_n>0$
			such that
			\[
				\bm\phi(x_1),\ldots,\bm\phi(x_n)
				\in\supp\mathrm{P}_{\bm\phi(X)},\qquad
				\E{\bm\phi(X)}
				=\sum_{i=1}^n w_i\phi(x_i).
			\]
		\item[(P2)]
			\textbf{Approximate cubature problem}:
			Without any knowledge of the exact value of $\E{\bm\phi(X)}$
			(except for $\E{\phi_1(X)}=1$),
			find $n$ $(\le d)$ points $x_1,\ldots,x_n\in\X$
			and weights $w_1,\ldots,w_n>0$
			such that
			\[
				\bm\phi(x_1),\ldots,\bm\phi(x_n)
				\in\supp\mathrm{P}_{\bm\phi(X)},\qquad
				\E{\bm\phi(X)}
				\simeq\sum_{i=1}^n w_i\phi(x_i).
			\]
	\end{itemize}
	(P1) is the usual (generalized) cubature construction problem.
	In (P2), since we do not know $\E{\bm\phi(X)}$,
	it is almost impossible to find an exact cubature formula.
	However, Monte Carlo or QMC
	integration well-approximates the expectation $\E{\bm\phi(X)}$
	without requiring any prior knowledge of said expectation.
	Therefore, it is possible to construct
	an approximate cubature formula.
	
	\subsection{Monte Carlo approach to the exact cubature problem}\label{sec:3-1}
	To solve (P1),
	we can use the i.i.d. samples $X_1,X_2,\ldots$ as candidates of
	the points used in the cubature formula.
	If, for some $N$, we have
	\[
		\E{\bm\phi(X)}\in\cv\{\bm\phi(X_1), \ldots, \bm\phi(X_N)\},
	\]
	then we can construct a cubature formula
	using Carath\'{e}odory-Tchakaloff subsampling (Method \ref{algo:c-t}).
	
	Indeed, we can prove that
	said $N$ exists almost surely and $\E{\min N}<\infty$
	($\min N$) is Borel measurable, because
	$\{(x_1,\ldots, x_n)\in \R^{d\times n}
	\mid y\not\in\cv\{x_1,\ldots, x_n\}\}$
	is an open set of $\R^{d\times n}$
	for each $y\in\R^d$).
	This fact can be proved using the lemma outlined below.
	\begin{lem}\label{lem:exp-ri}
		The expectation $\E{\bm\phi(X)}$ belongs to the relative interior of
		$\cv\supp\mathrm{P}_{\bm\phi(X)}$.
	\end{lem}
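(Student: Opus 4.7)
The plan is to argue by contradiction using the separation theorem. Write $Y := \bm\phi(X)$, $K := \supp \mathrm{P}_Y$, $V := \aff K$, and let $V_0$ denote the linear subspace parallel to the affine space $V$. I assume for contradiction that $\E{Y} \notin \ri \cv K$ and aim to contradict $\aff K = V$.

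First I would verify the harmless step that $\E{Y} \in V$. Any affine subspace $V \subset \R^d$ can be cut out by finitely many affine equations $a_i^\top y = b_i$. For each such equation, $a_i^\top Y = b_i$ holds almost surely because $Y \in K \subset V$ a.s.; since $\E{\|\bm\phi(X)\|} < \infty$, taking expectations yields $a_i^\top \E{Y} = b_i$, so $\E{Y} \in V$.

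Since $\E{Y}$ lies in $V$ but not in the relative interior of $\cv K$, I would apply the separation theorem inside $V$ (viewing $V$ as its own Euclidean ambient space under the relative topology) to obtain a unit vector $c \in V_0 \setminus \{0\}$ and a scalar $z \in \R$ with $c^\top y \ge z$ for every $y \in \cv K$ and $c^\top \E{Y} \le z$. Taking expectation of the almost sure inequality $c^\top Y \ge z$ gives $c^\top \E{Y} \ge z$, hence $c^\top \E{Y} = z$; then $c^\top Y - z$ is a nonnegative random variable with zero mean, so $c^\top Y = z$ almost surely.

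Consequently $Y$ lies in the affine hyperplane $H := \{y \in \R^d : c^\top y = z\}$ with probability one, so $K \subset H$ and hence $V = \aff K \subset H$. But $c$ is a nonzero vector of $V_0$, which makes $H \cap V$ a proper affine subspace of $V$, contradicting $V \subset H \cap V$. The delicate point is the separation itself: the normal $c$ must lie in $V_0$ rather than in an arbitrary direction of $\R^d$ (if $c$ were orthogonal to $V_0$, the hyperplane $H$ could absorb all of $V$ and no contradiction would arise), and this is exactly why the separation theorem is applied relative to $V$ instead of in the ambient $\R^d$.
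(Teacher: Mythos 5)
Your overall strategy coincides with the paper's: first check that $\E{\bm\phi(X)}$ lies in $\aff K$ (you do this via the affine equations cutting out $V$, the paper via a contradiction with a linear functional that is constant on the support --- the same computation), then separate within $V$, then use the fact that a nonnegative random variable with zero mean vanishes almost surely to force $K$ into a proper affine subspace of $V$, contradicting minimality of the affine hull. The one step that does not go through as written is the separation itself. You apply the separation theorem to the pair $\{\E{\bm\phi(X)}\}$ and $\cv K$, but these two sets are \emph{not} disjoint: as the paper notes in Section \ref{sec:discrete}, $\E{\bm\phi(X)}$ always lies in $\cv\supp\mathrm{P}_{\bm\phi(X)}$, so under your contradiction hypothesis it is a relative boundary point of $\cv K$, and the separation theorem as stated (which requires $A\cap B=\emptyset$) does not apply to this pair. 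You flag the wrong delicate point: the real issue is not only that $c$ must lie in $V_0$, but that you are implicitly invoking a supporting-hyperplane / proper-separation statement at a boundary point rather than separation of disjoint convex sets.

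The gap is easily repaired. Inside $V$, the set $\ri\cv K$ is a nonempty open convex set that is disjoint from $\{\E{\bm\phi(X)}\}$ by hypothesis, so the stated separation theorem applies to that pair and yields a unit vector $c\in V_0$ and $z\in\R$ with $c^\top y\ge z$ on $\ri\cv K$; passing to the closure extends the inequality to all of $\cv K$, after which your argument ($c^\top \E{\bm\phi(X)}=z$, hence $c^\top\bm\phi(X)=z$ a.s., hence $K\subset H$ and $V\subset H$, impossible for a nonzero $c\in V_0$) is exactly the paper's. The paper circumvents the same difficulty differently: it picks exterior points $a_n\notin\cv K$ within distance $1/n$ of $\E{\bm\phi(X)}$ (these exist precisely because the expectation is assumed not to be a relative interior point), separates each $a_n$ from $\cv K$ using only the disjoint-sets version of the theorem, and extracts a convergent subsequence of the unit normals by compactness of the unit sphere of $V_0$. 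Both routes are valid; yours is shorter once the correct form of the separation step is invoked.
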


	\begin{proof}
		Denote $\supp\mathrm{P}_{\bm\phi(X)}$ by $S$.
		First, we show that $\E{\bm\phi(X)}\in\aff S$.
		If we suppose not,
		we can take a nonzero vector $c\in \R^d$ such that
		$c^\top x$ is constant for $x\in S$ and $c^\top \E{\bm\phi(X)}$ is a different value.
		Indeed, this would be an absurd assumption to make, since $c^\top \E{\bm\phi(X)}$ is the expectation
		of $c^\top\bm\phi(X)$, which should be equal to the aforementioned constant.
		Therefore, we have $\E{\bm\phi(X)}\in\aff S$.
		
		Suppose $\E{\bm\phi(X)}\not\in\ri\cv S$.
		By translating if necessary,
		we can assume $\E{\bm\phi(X)}=0$
		and $\aff S$ is a linear subspace of $\R^d$.
		Then, for each $n=1,2,\ldots$,
		$A_n:=\{ x\in \aff S \mid \|x\|<1/n\}$
		has a nonempty intersection with $(\cv S)^c$.
		Take a sequence $a_1,a_2,\ldots$ such that
		$a_n\in A_n\setminus \cv S$ for each $n$.
		Then, by the separation theorem,
		we have a sequence of unit vectors $c_n\in\aff S$ that separate $a_n$ from $\cv S$;
		i.e., they satisfy $c_n^\top a_n \ge c_n^\top x$ for all $x\in\cv S$.
		Here, by the compactness of the set $C:=\{c\in \aff S \mid \|c\|=1\}$,
		the sequence $c_1,c_2,\ldots$ has a subsequence that is convergent with $c\in C$.
		Because $c_n^\top a_n \le \|c_n\|\cdot \|a_n\|<1/n$ for each $n$,
		we have $c^\top x \le 0$ for all $x\in \cv S$.
		As we have assumed $\E{\bm\phi(X)}=0$,
		we can obtain $c^\top\bm\phi(X)=0$ almost surely,
		so $S=\supp\mathrm{P}_{\bm\phi(X)}$ is included in the lower-dimensional subspace
		$\{x\in \aff S \mid c^\top x=0\}$ (see Remark \ref{rem:rigorous}).
		This obviously contradicts the definition of $\aff S$ (the smallest affine subspace including $S$).
		Therefore, we have $\E{\bm\phi(X)}\in \ri\cv\supp\mathrm{P}_{\bm\phi(X)}$.
	\end{proof}

	The theorem outlined below, which shows the existence of desired $N$, is the main result of this paper.

	\begin{thm}\label{thm:main}
		Let $X_1, X_2, \ldots$ be i.i.d. copies of $X$.
		Then, with probability $1$,
		there exists a positive integer $N$ such that
		$\E{\bm\phi(X)}$ is contained in $\cv\{\bm\phi(X_1),\ldots,\bm\phi(X_N)\}$.
	\end{thm}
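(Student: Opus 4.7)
The plan is to combine Lemma \ref{lem:exp-ri}, Theorem \ref{thm:ri}, a perturbation argument, and the second Borel--Cantelli lemma. Set $p := \E{\bm\phi(X)}$ and $S := \supp\mathrm{P}_{\bm\phi(X)}$. First I would apply Lemma \ref{lem:exp-ri} to get $p \in \ri\cv S$, then invoke Theorem \ref{thm:ri} with $k := \dim\aff S$ to extract a finite witness $T = \{t_1,\ldots,t_m\} \subset S$ with $m \le 2k$, $\aff T = \aff S$, and $p \in \ri\cv T$. Being in the relative interior, $p$ admits a representation $p = \sum_{i=1}^m \lambda_i t_i$ with all $\lambda_i > 0$ and $\sum_i \lambda_i = 1$.

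Next I would establish a perturbation lemma: there exists $\ve > 0$ such that whenever $y_1,\ldots,y_m \in \aff S$ satisfy $\|y_i - t_i\| < \ve$ for every $i$, one has $p \in \cv\{y_1,\ldots,y_m\}$. My preferred route is the implicit function theorem applied to the smooth map
\[
F(\mu, y) := \left( \textstyle\sum_{i=1}^m \mu_i y_i,\ \sum_{i=1}^m \mu_i \right)
\]
from $\R^m \times (\aff S)^m$ into $(\aff S)\times\R$. At the base point $(\lambda, (t_1,\ldots,t_m))$ the map $F$ equals $(p, 1)$, while the partial derivative $\partial_\mu F$ surjects onto $(\aff S)\times\R$ precisely because $\aff T = \aff S$. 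Hence there is a continuous local map $y \mapsto \mu(y)$ with $\mu(t) = \lambda$ and $F(\mu(y), y) = (p, 1)$; continuity together with the strict positivity of the $\lambda_i$ then forces $\mu_i(y) > 0$ on a possibly smaller neighborhood, which yields the claim.

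Finally I would close the argument by an almost-sure coverage step. Since each $t_i \in S = \supp\mathrm{P}_{\bm\phi(X)}$, the open ball $B(t_i, \ve)$ has positive probability under $\mathrm{P}_{\bm\phi(X)}$, so the independent events $\{\bm\phi(X_n) \in B(t_i, \ve)\}_{n \ge 1}$ each occur for infinitely many $n$ almost surely by the second Borel--Cantelli lemma; intersecting over the finite index set $i = 1,\ldots,m$, with probability one there are indices $n_1,\ldots,n_m$ with $\bm\phi(X_{n_i}) \in B(t_i, \ve)$, and taking $N := \max_i n_i$ the perturbation lemma places $p$ in $\cv\{\bm\phi(X_{n_1}),\ldots,\bm\phi(X_{n_m})\} \subset \cv\{\bm\phi(X_1),\ldots,\bm\phi(X_N)\}$. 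The main obstacle I expect is the perturbation lemma: one must simultaneously guarantee that the perturbed $y_i$ still affinely span $\aff S$ and that the recomputed convex-combination weights remain strictly positive, both of which the implicit-function-theorem framing handles cleanly provided the perturbations are constrained to $\aff S$ (which is automatic since $\bm\phi(X_n) \in S$ almost surely).
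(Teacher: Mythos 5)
Your proposal is correct and follows essentially the same route as the paper's proof: Lemma \ref{lem:exp-ri} plus Theorem \ref{thm:ri} produce the finite witness set $T$, a perturbation argument shows that samples landing within a fixed distance of the points of $T$ still capture $\E{\bm\phi(X)}$ in their convex hull, and the positive mass of each such neighborhood (by the definition of $\supp\mathrm{P}_{\bm\phi(X)}$) finishes the argument almost surely. The only difference is internal to the perturbation step: the paper proves it by contradiction with a separating hyperplane and a ball $B_r\subset\cv T$, whereas you continuously track the convex weights via a surjective implicit-function-theorem argument; both work, with yours requiring the extra care you already flagged (the perturbed points must still affinely span $\aff S$, and the relevant target space is the $(k+1)$-dimensional homogenization of $\aff S$ rather than literally $(\aff S)\times\R$).
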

	\begin{proof}
		Denote $\supp\mathrm{P}_{\bm\phi(X)}$ by $S$.
		By Lemma \ref{lem:exp-ri}
		and Theorem \ref{thm:ri},
		there exists a set $T\subset S$
		such that $\E{\bm\phi(X)}\in \ri\cv T$
		and $|T|\le 2d$.
		Then, there exists an $r>0$ satisfying
		\[
			B_r:=\{x\in \aff S
			\mid \|x-\E{\bm\phi(X)}\|\le r\}\subset \cv T.
		\]
		If the elements of $T$ are subscripted as
		$T=\{x_1,\ldots, x_{|T|}\}$,
		we can regard $\cv T$ as the image of the mapping
		\[
			\tau:\Delta^{|T|}\to \aff S;\qquad
			(t_1, \ldots, t_{|T|})\mapsto t_1x_1+\cdots+t_{|T|}x_{|T|},
		\]
		where $\Delta^{|T|}:=\{(t_1,\ldots,t_{|T|})\in\R^{|T|}\mid
		t_1,\ldots,t_{|T|}\ge0,\ t_1+\cdots+t_{|T|}=1\}$.
		
		We prove that,
		if points $y_1, \ldots, y_{|T|}\in S$ satisfy
		$\|y_i-x_i\|<r$ for each $i=1,\ldots, T$,
		then $\E{\bm\phi(X)}\in \cv\{y_1,\ldots, y_{|T|}\}$ holds.
		If this is not true, there exist the points $y_1,\ldots, y_{|T|}$
		with $\E{\bm\phi(X)}\not\in\cv\{y_1,\ldots,y_{|T|}\}$.
		Then, by the separation theorem,
		there exists a hyperplane $H\subset \aff S$
		going through $\E{\bm\phi(X)}$
		such that all the points $y_1,\ldots,y_{|T|}$ are contained in
		one closed-half space (of $\aff S$) made by $H$.
		We now can take a point $z\in B_r$ such that
		$\|z-\E{\bm\phi(X)}\|=r$, $z-\E{\bm\phi(X)}\perp H$ and
		$z$ is lying on the other side of $H$ than $y_1,\ldots,y_{|T|}$.
		Since $B_r\subset\cv T$,
		we can take a weight $(t_1,\ldots,t_{|T|})\in \tau^{-1}(z)$ and, accordingly,
		we have
		\[
			\|(t_1y_1+\cdots+t_{|T|}y_{|T|})-z\|
			\le t_1\|y_1-x_1\|+\cdots+t_{|T|}\|y_{|T|}-x_{|T|}\|
			< r.
		\]
		This means that $t_1y_1+\cdots+t_{|T|}y_{|T|}$ lies on the other side
		of $H$ than $y_1,\ldots,y_{|T|}$, which is impossible.
		Therefore, the aforementioned assertion is true.
		
		By the definition of $\supp\mathrm{P}_{\bm\phi(X)}$,
		we have $\mathrm{P}(\bm\phi(X)\in S,\ \|\bm\phi(X)-x_i\|<r)>0$
		for each $i=1,\ldots, |T|$.
		Thus, the assertion of the theorem follows.
	\end{proof}
	As mentioned above, we can also see from the above proof that
	$\E{\min N}<\infty$.
	There exists a concept called the (random) degree function
that generalizes $\min N$:
	\[
		\deg(x; \mathrm{P}_X):=\min\{n \mid x\in \cv \{X_1, \ldots, X_n\}\},
	\]
	where $x\in\R^d$, $\mathrm{P}_X$ is a probability law over $\R^d$
	and $X_1,X_2,\ldots$ is an i.i.d. sequence following
	the law $\mathrm{P}_X$.
	This function and the depth, which is related to the degree,
	have been treated by \citeasnoun{cas07} and \citeasnoun{liu90}.
	Our interest is $\deg(\E{X}; \mathrm{P}_X)$,
	but existing results are based on assuming an absolute continuity
	(and angular symmetry) of the law $\mathrm{P}_X$.
	Indeed, these are strong assumptions; however,
	the results for depths functions may still be useful in analyzing our method
	for concrete distributions in future work.

	\subsection{Approaches to the approximate cubature problem}\label{sec:3-2}
	Considering (P2),
	there are two cases, which are outlined below.
	\begin{itemize}
		\item[(a)]
			We are familiar with the distribution of $X$,
			but we do not know the exact value of $\E{\bm\phi(X)}$.
		\item[(b)]
			We are not familiar with $X$.
	\end{itemize}

	In (a),
	we assume there exists known QMC formulas
	or some general discrete approximation of $\mathrm{P}_X$.
	In this case,
	if $\bm\phi$ is a good basis in some sense
	(e.g., the space of functions we want to integrate),
	it is not necessary to use all of the information of existing formulas.
	Mathematically speaking,
	from a given discrete approximation $\hat\mu$ of $\mathrm{P}_X$,
	we can make a Carath\'{e}odory-Tchakaloff subsampling
	$\hat\mu_\mathrm{CT}$
	of $\hat\mu$.
	This is one of the usual applications of the subsampling method.
	
	In (b), however, we do not have such a measure in advance.
	Rather, in such a situation, we generally have to carry out
	Monte Carlo integration every time we want the integration of some integrand.
	However, once we have a sufficiently large i.i.d. sample
	$X_1,X_2,\ldots,X_N$ of $X$,
	we can construct a subsampling $\hat\mu_\mathrm{CT}$ of the measure
	$\hat\mu=\frac1N\sum_{i=1}^N\delta_{X_i}$.
	This $\hat\mu_\mathrm{CT}$ is random but statistically satisfies
	\[
		\E{\int_\X\bm\phi(x)\dd\hat\mu_\mathrm{CT}(x)}
		=\E{\int_\X\bm\phi(x)\dd\hat\mu(x)}
		=\E{\bm\phi(X)},
	\]
	\[
		\E{\left|\int_\X\bm\phi(x)\dd\hat\mu_\mathrm{CT}(x)
		-\E{\bm\phi(X)}
		\right|^2}
		=\E{\left|\int_\X\bm\phi(x)\dd\hat\mu(x)
			-\E{\bm\phi(X)}
			\right|^2}
		=\mathrm{O}\left(\frac1N\right)
	\]
	under an additional moment condition $\E{\|\bm\phi(X)\|^2}<\infty$.
	The merit of constructing $\hat\mu_{\mathrm{CT}}$ is that
	we only need the value at $d$ points for each integrand,
	whereas the standard Monte Carlo requires $N$ points valuation of each integrand.
	Therefore, the Monte Carlo approach to (P2) is also effective.

\section{Experimental Results}\label{sec:4}
	
	To identify how many samples we should take in constructing cubature formulas,
	we conducted numerical experiments.
	For each pair $(s, m) \in \{1,2,3,4,5\}\times\{1,2,3,4,5\}$,
	we estimated the smallest value of $N$ with
	\begin{align}
		\mathrm{P}\left(
			\E{\bm\phi(X)}\in\cv\{\bm\phi(X_1),\ldots,\bm\phi(X_N)\}
		\right)\ge \frac12,
		\label{eq:4-1}
	\end{align}
	where
	$\bm\phi$ is a vector-valued function, the entries of which are composed of
	all $s$-variate monomials with degree $m$ at most,
	and $X$ is a uniform random variable on $[0, 1)^s$, and $X_1, X_2,\ldots$ are i.i.d. copies of $X$.
	In judging whether $\E{\bm\phi(X)}\in\cv\{\bm\phi(X_1),\ldots,\bm\phi(X_n)\}$ holds
	given the values of $X_1,\ldots,X_n$,
	we used the LP solver {\ttfamily glpsol} in the package GNU LP Kit\footnote{\url{https://www.gnu.org/software/glpk/}} (GLPK, version 4.65).
	Note that we do not have to construct a cubature explicitly
	in confirming $\E{\bm\phi(X)}$ is contained in the convex hull of given vectors.
	Although
	we used the simplex method in the experiment and actually constructed a cubature formula with the desired size,
	in this section we can use other means, such as the interior-point method.
	To estimate the smallest $N$ satisfying Eq. (\ref{eq:4-1}), we conducted binary search on $[d_{s, m}, 10000]$,
	where $d_{s, m}$ is the dimension of the vector $\bm\phi$.
	During the binary search (with $n\in [d_{s, m}, 10000]$),
	we independently sample $(X^i_1,\ldots,X^i_n)$ 20 times ($i=1,\ldots, 20$); when there were at least $10$ indices $i\in\{1,\ldots,20\}$
	such that
	$\E{\bm\phi(X)}\in\cv\{\bm\phi(X^i_1),\ldots,\bm\phi(X^i_n)\}$ held,
	$n$ was judged to be larger than (or equal to) $N$;
	otherwise it was judged as being smaller than $N$.
	The experimental results are shown in Table \ref{table:1}.
	Unfortunately, they are not the exact values of $N$
	as statistical and numerical errors exist (due to the precision of {\ttfamily double}); however,
	they work as estimated values of $N$.
	\begin{table}[ht]
	\caption{\small Estimation of $N$ for each $(s, m)$.
		In each entry, ``$A\ (B)$" means $(\text{estimated $N$}) = A$ and $d_{s, m}=B$.
		We had to set a 10-second time limit for the LP solver in the case $(s, m)=(5, 5)$ because, at times,
		the solver stopped due to numerical instability }\label{table:1}
		\begin{center}
			\begin{tabular}{|c||c|c|c|c|c|}
				\hline
				$s \setminus m$ & 1 & 2 & 3 & 4 & 5 \\
				\hhline{|=#=|=|=|=|=|}
				1 & 3 (2) & 6 (3) & 9 (4) & 12 (5) & 14 (6) \\
				\hline
				2 & 5 (3) & 10 (6) & 22 (10) & 36 (15) & 66 (21) \\
				\hline
				3 & 6 (4) & 21 (10) & 49 (20) & 90 (35) & 160 (56) \\
				\hline
				4 & 7 (5) & 33 (15) & 81 (35) & 179 (70) & 338 (126) \\
				\hline
				5 & 11 (6) & 44 (21) & 123 (56) & 311 (126) & 663 (252) \\
				\hline
			\end{tabular}
		\end{center}
	\end{table}

	Although it might be difficult to give theoretical bounds for $N$,
	from the experiment it is event that $N$ should not be as large as it is compared with $d_{s, m}$.
	In this polynomial case,
	$N$ takes values equivalent to roughly $2d_{s, m}$ or $3d_{s, m}$.
	Accordingly, we can obtain a cubature formula with realistic computational costs,
	as long as $d_{s, m}$, which is the number of test functions, is not so large.
	
\section{Concluding Remarks}\label{sec:5}

	In this study, we have shown that we can construct a general cubature formula
	of a probability measure
	if we have an i.i.d. sampler from the measure and if we know the exact mean values
	of the given test functions.
	We can construct a cubature formula by solving an LP problem
	given sufficiently large i.i.d. sampling.
	The proof of this fact is based on the discrete geometry results.
	We have also seen that our Monte Carlo approach is effective even
	if we do not know the mean values of the test functions.
	Indeed, we can construct an approximate cubature formula
	using points given by the Monte Carlo sampling,
	a process that can be regarded as a data compression
	(Carath\'{e}odory-Tchakaloff subsampling).
	By numerical experiments,
	we have empirically seen that
	the size of the LP problem we were required to solve should not be so large
	compared with the number of test functions.
	
	Although we assumed that we can sample the exact i.i.d. sequence from
	the given probability measure, in future work
	we should actually consider
	the possibility of not having exact samplers
	(typically the case we use MCMC to sample).

\section*{Acknowledgments}

The author is grateful to Ken'ichiro Tanaka for his valuable suggestions.
The author wold like to thank Enago (www.enago.jp) for the English language review.
\bibliographystyle{dcu}
\bibliography{cite1}

%
%

\end{document}